\pgfplotsset{compat=1.10}
\tikzset
{
    treenode/.style = {circle, draw=black, align=center, minimum size=1cm},
    subtree/.style  = {isosceles triangle, draw=black, align=center, minimum height=0.5cm, minimum width=1cm, shape border rotate=90, anchor=north}
}
\title[Continuous time digital search tree]
{Continuous time digital search tree\\
and a border aggregation model}
\date{
28 April, 2020}
\author{Svante Janson} 
\thanks{Partly supported by the Knut and Alice Wallenberg Foundation}
\address[Svante Janson]{Department of Mathematics, Uppsala University, PO
  Box 480, SE-751~06 Uppsala, Sweden}
\email{svante.janson@math.uu.se}
\urladdr{http://www.math.uu.se/svante-janson}
\author{Debleena Thacker}  
\address[Debleena Thacker]
{Department of Mathematics, NYU, Shanghai,
         1555 Century Avenue, Pudong New District, Shanghai, China 200122
}         
\email{thackerdebleena@gmail.com}
\subjclass[2010]{} 
\numberwithin{equation}{section}
\renewcommand\le{\leqslant}
\renewcommand\ge{\geqslant}
\theoremstyle{plain}
\newtheorem{theorem}{Theorem}[section]
\newtheorem{corollary}[theorem]{Corollary}
\newtheorem{conjecture}[theorem]{Conjecture}
\theoremstyle{definition}
\newtheorem{definition}[theorem]{Definition}
\newtheorem{remark}[theorem]{Remark}
\theoremstyle{remark}
\newenvironment{romenumerate}[1][-10pt]{
\addtolength{\leftmargini}{#1}\begin{enumerate}
 }{\end{enumerate}}
\newcounter{oldenumi}
{\setcounter{oldenumi}{\value{enumi}}
\begin{romenumerate} \setcounter{enumi}{\value{oldenumi}}}
{\end{romenumerate}}
\newcounter{thmenumerate}
\newcounter{xenumerate}   
\newcommand\pfitemx[1]{\par#1:}
\newcommand\pfitemref[1]{\pfitemx{\ref{#1}}}
\newcommand{\refT}[1]{Theorem~\ref{#1}}
\newcommand{\refTs}[1]{Theorems~\ref{#1}}
\newcommand{\refC}[1]{Corollary~\ref{#1}}
\newcommand{\refR}[1]{Remark~\ref{#1}}
\newcommand{\refS}[1]{Section~\ref{#1}}
\newcommand{\refSS}[1]{Section~\ref{#1}}
\newcommand{\refD}[1]{Definition~\ref{#1}}
\newcommand{\refConj}[1]{Conjecture~\ref{#1}}
\xdef\klockan{\the\count1.0\the\count255}
\xdef\klockan{\the\count1.\the\count255}\fi
\newcommand\set[1]{\ensuremath{\{#1\}}}
\newcommand\bigset[1]{\ensuremath{\bigl\{#1\bigr\}}}
\newcommand\xpar[1]{(#1)}
\newcommand\bigpar[1]{\bigl(#1\bigr)}
\newcommand\Bigpar[1]{\Bigl(#1\Bigr)}
\newcommand\biggpar[1]{\biggl(#1\biggr)}
\newcommand\lrpar[1]{\left(#1\right)}
\newcommand\xcpar[1]{\{#1\}}
\def\rompar(#1){\textup(#1\textup)}    
\newcommand\xfrac[2]{#1/#2}
\newcommand\parfrac[2]{\lrpar{\frac{#1}{#2}}}
\newcommand\Bigparfrac[2]{\Bigpar{\frac{#1}{#2}}}
\def\xexp(#1){e^{#1}}
\newcommand\floor[1]{\lfloor#1\rfloor}
\newcommand\Ktoo{\ensuremath{{K\to\infty}}}
\newcommand\punkt{.\spacefactor=1000}    
\newcommand\iid{i.i.d\punkt}    
\newcommand\ie{i.e\punkt}
\newcommand\eg{e.g\punkt}
\newcommand\viz{viz\punkt}
\newcommand\whp{w.h.p\punkt}
\newcommand{\tend}{\longrightarrow}
\newcommand\pto{\overset{\mathrm{p}}{\tend}}
\newcommand\eqd{\overset{\mathrm{d}}{=}}
\newcommand\bbZ{\mathbb Z}
\newcounter{CC}
\newcounter{cc}
\newcommand\E{\operatorname{\mathbb E{}}}
\renewcommand\P{\operatorname{\mathbb P{}}}
\newcommand\Exp{\operatorname{Exp}}
\newcommand\Po{\operatorname{Po}}
\newcommand\Be{\operatorname{Be}}
\newcommand\gl{\lambda}
\newcommand\go{\omega}
\newcommand\gth{\theta}
\newcommand\eps{\varepsilon}
\renewcommand\phi{\xxx}  
\newcommand\cA{\mathcal A}
\newcommand\cO{\mathcal O}
\newcommand\ett[1]{\boldsymbol1\xcpar{#1}}
\newcommand\qw{^{-1}}
\newcommand\qqw{^{-1/2}}
\newcommand\intoo{\int_0^\infty}
\newcommand\ooo{[0,\infty)}
\newcommand\dd{\,\mathrm{d}}
\newcommand{\pgf}{probability generating function}
\newcommand\lhs{left-hand side}
\newcommand\rhs{right-hand side}
\newcommand\etto{\bigpar{1+o(1)}}
\newcommand\dt{\mathcal T}
\newcommand\ct{\mathfrak T}
\newcommand\dtn{\dt_n}
\newcommand\ctt{\ct_t}
\newcommand\too{T_\infty}
\newcommand\Voo{V(\too)}
\newcommand\Ve{V_{\mathsf e}}
\newcommand\Vi{V_{\mathsf i}}
\newcommand\xoo{_0^\infty}
\newcommand\taux[1]{\tau(#1)}
\newcommand\taun{\taux{n}}
\newcommand\cxi{\Xi}
\newcommand\YY{Y^*}
\newcommand\YYY{Y^{*\prime}}
\newcommand\YYX{Y^{**}}
\newcommand\oL{o_{\mathsf L}}
\newcommand\oR{o_{\mathsf R}}
\newcommand\xxi{\Xi_{K+1}^*}
\newcommand\xil{\Xi_{\mathsf L}}
\newcommand\xir{\Xi_{\mathsf R}}
\newcommand\YYL{\YY_{\mathsf L}}
\newcommand\YYR{\YY_{\mathsf R}}
\newcommand\he{h_{\mathsf e}}
\newcommand\dst{digital search tree}
\newcommand\cdst{continuous-time digital search tree}
\newcommand\bam{border aggregation model}
\newcommand\cbam{continuous-time \bam}
\newcommand\OO{\cO}
\newcommand\oo{o}
\newcommand\Op{\OO_{\mathrm p}}
\newcommand\lgg{\log_2}
\newcommand\kkk{\tilde{k}}
\newcommand\hth{\tilde\gth}
\renewcommand\ln{\log}
\newcommand\mk{m_K}
\newcommand\nk{\bar n_K}
\begin{document}

\begin{abstract} 
We consider the continuous-time version of the random digital search tree, 
and construct a coupling with a border aggregation model as studied in Thacker and Volkov (2018), showing 
a relation between the height of the tree and the time required for
aggregation.
This relation carries over to the corresponding discrete-time models.
As a consequence we find a very precise asymptotic result for the time to
aggregation, using recent results by Drmota et al.\ (2020) 
for the  digital search tree. 
\end{abstract}

\maketitle


\section{Introduction}\label{S:intro}

A \emph{digital search tree} $\dtn$ is a binary tree constructed from a
sequence of $n$ binary strings (called \emph{items} or \emph{keys}).
(See \refS{Sprel} for details, as well as for definitions
of other concepts used below.)
We consider here only the case when the items are 
\iid{} (independent, identically distributed) random infinite binary strings, 
and furthermore, in each string the
digits are independent 
$\Be(1/2)$ random variables,
\ie, 0 or 1 with probability $\frac12$ each.
(See \refS{Sdary} for the $b$-ary case.)
Digital search trees are among the fundamental objects of study in computer
science algorithms and have been studied by many authors,
see \eg{} \cite{Al_Sh_1988, Drmota2002, Drmota-book,
DrmotaEtAl-DST, Dr_Ja_Ne_2008, JacquetS-book,
Kn_Sz_2000, KnuthIII, Mahmoud-book}.

Our main concern is with a continuous-time version of the digital search
tree, studied also by \citet{Al_Sh_1988}.
This can be defined by assuming that an infinite sequence $(W_n)$ of items
arrive at random times that are given by a Poisson process; 
we then let $\ctt$ be the digital search tree defined by the strings arriving up
to time $t$.
The continuous-time version is thus a Poissonization of the standard version.
A simple but central result
(\refT{T=} and \cite{Al_Sh_1988})
is that the \cdst{}
$\ctt$ also can be defined in two
other ways that turn out to be equivalent;
in particular, the continuous-time digital search tree is equivalent to
\emph{first-passage percolation} on the infinite binary tree, with the
passage times of the edges exponentially distributed such that the
passage time of an edge between nodes of depth $k-1$ and $k$ has expectation
$2^k$.

 Our main result couples the \cdst{}
and a \emph{border aggregation model} on a binary tree
studied by \citet{ThackerVolkov}.
In this model, we fix $K\ge1$ and consider the complete binary tree $T_K$ of
height $K$. 
We recursively define a collection of randomly growing
subset of \emph{sticky} nodes $S_n$, such that $S_0$ is the set of the $2^K$
nodes of depth $K$. $S_n$ is obtained from $S_{n-1}$ as follows: 
A particle is released from the root, and performs a (directed) random walk 
until it comes to a neighbour $v_n$ of $S_{n-1}$. The random walk now stops, and
the node $v_n$  becomes "sticky"; in other words, $S_n:=S_{n-1}\cup\set{v_n}$.
This is repeated until the root $o$ is sticky. 
Let $\xi_K$ be the random number of particles to be
released until the root $o$ is sticky.
We define also a continuous-time version of the \bam{} by assuming that 
particles start from the root at times given by a Poisson process
(and that the random walk itself takes no time); let $\Xi_K$ be the random
time that the root gets sticky in the \cbam.

\begin{figure}[h]
\begin{center}
 \begin{tikzpicture}[scale=0.8, every node/.style={circle, inner sep=2.35pt, draw=black, fill=},
 level 1/.style={sibling distance=7cm, fill=white},
        level 2/.style={sibling distance=4cm, },
        level 3/.style={sibling distance=1.9cm, },
        level 4/.style={sibling distance=.5cm, fill=red}]
        \node()[fill=white]{}
        child{node (0) [fill=white]{}
            child{node (00) {}
                child{node (000) {}
                    child{node (0000){}}
                    child{node(0001){}}}
                child{node (001){}
                    child{node (0010){}}
                    child{node(0011){}}}}
            child{node (01) {}
                child{node(010){}
                    child{node(0100){}}
                    child{node(0101){}}}    
                child{node(011){}
                    child{node(0110){}}
                    child{node(0111){}}}
            }}
        child{node (1) {}
            child{node (10) {}
                child{node (100) {}
                    child{node(1000){}}
                    child{node(1001){}}}
                child{node (101){}
                    child{node (1010) {}}
                    child{node (1011) {}}}}
            child{node (11) {}
                child{node(110){}
                    child{node(1100){}}
                    child{node(1101){}}}
                child{node(111){}
                    child{node(1110){}}
                    child{node(1111){}}}}};
   \end{tikzpicture} 
\caption{Binary tree $T_K$ with $K=4$, and the red nodes denoting $S_0$}
\label{Fig: Binary_tree_boundary}
\end{center}	
\end{figure}
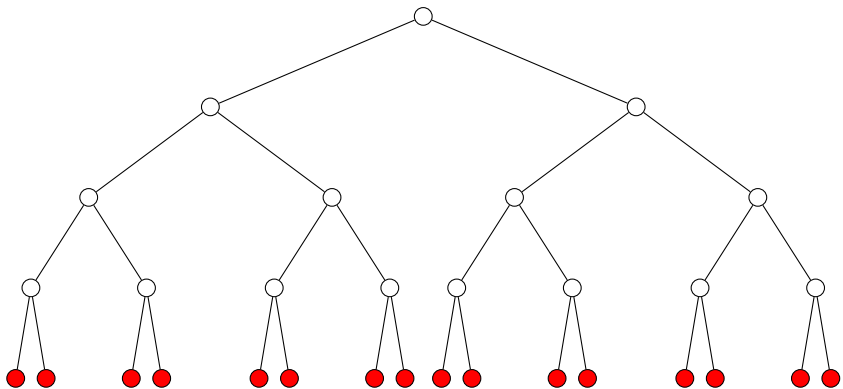

Note that the \dst{} and the \bam{} grow in opposite directions:
the \dst{} grows from the root downwards, while the \bam{} grows from the 
starting boundary at depth $K$ up towards the root. Nevertheless, they are
connected by a kind of duality, and we show 
that the time $\xi_K$ or $\Xi_K$ taken by the \bam{} equals in distribution
the time the (discrete or continuous-time, respectively)
\dst{} reaches (external) height $K$ (\refT{TB}).
Equivalently, we have the following results,
where $\he(T)$ denotes the external height of a  tree $T$.
\begin{theorem}\label{TC}
The following equalities hold.
  \begin{romenumerate}
  \item \label{Tcd}
(Discrete time.)
For any $K\ge1$ and $n\ge0$,
  \begin{equation}\label{tcd}
  \P\bigpar{\xi_{K}\le n}
=
\P\bigpar{\he(\dtn)\ge K}.
\end{equation}

  \item\label{Tcc} (Continuous time.)
For any $K\ge1$ and $t\ge0$,
  \begin{equation}\label{tcc}
  \P\bigpar{\Xi_{K}\le t}
=
\P\bigpar{\he(\ctt)\ge K}.
\end{equation}
  \end{romenumerate}
\end{theorem}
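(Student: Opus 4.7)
The plan is to reduce the two CDF identities to equalities in distribution of random times, prove the continuous-time version by a recursive branching decomposition at the root, and then deduce the discrete-time version by de-Poissonization. Set $\sigma_K:=\min\set{n\ge1:\he(\dtn)\ge K}$ and $\tau_K:=\inf\set{t\ge0:\he(\ctt)\ge K}$. Inserting an item into a DST only converts an external node at some depth $d$ into an internal node with two new external children at depth $d+1$, so $\he(\dtn)$ is non-decreasing in $n$; likewise $\he(\ctt)$ is non-decreasing in $t$. Hence the events on the right-hand sides of \eqref{tcd} and \eqref{tcc} coincide with $\set{\sigma_K\le n}$ and $\set{\tau_K\le t}$, so the theorem reduces to $\xi_K\eqd\sigma_K$ and $\Xi_K\eqd\tau_K$, which is the content of \refT{TB}.

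For the continuous-time identity we prove by induction on $K$ that both $\Xi_K$ and $\tau_K$ satisfy the distributional recursion
\begin{equation*}
 Z_K \;\eqd\; E + \min\bigpar{2Z_{K-1}',\, 2Z_{K-1}''},
\end{equation*}
where $E\sim\Exp(1)$ and $Z_{K-1}',Z_{K-1}''\eqd Z_{K-1}$ are mutually independent, with base case $Z_1\sim\Exp(1)$. For the CDST this is the classical branching decomposition: the root is filled at time $E\sim\Exp(1)$; the subsequent rate-$1$ Poisson stream splits by the first bit into two independent rate-$1/2$ substreams, so each subtree evolves as an independent CDST on a doubled time scale, and the first time $\he(\ctt)\ge K$ is $E$ plus the minimum of the two rescaled subtree hitting times of $\he\ge K-1$. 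For the CBAM the analogous splitting applies: before the root becomes a boundary, a particle at the root takes its next step by an independent fair coin, so the two subtrees receive independent rate-$1/2$ streams of particles and evolve as independent CBAMs on $T_{K-1}$ at half rate; hence the sticking times $T_{\mathsf L},T_{\mathsf R}$ of the two children of the root are independent copies of $2\Xi_{K-1}$. At $\min(T_{\mathsf L},T_{\mathsf R})$ the root becomes a boundary, and by memorylessness of the arrival process the next particle arrives after an independent $\Exp(1)$ wait and sticks the root. The base case $K=1$ is immediate: both children of the root of $T_1$ lie in $S_0$, so the root is a boundary from the start and is stuck by the first arriving particle at an $\Exp(1)$ time; likewise $\tau_1$ equals the first item arrival, also $\Exp(1)$.

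For the discrete identity we de-Poissonize. Both $\sigma_K$ and $\xi_K$ are measurable with respect to the bit strings and the random walks only, and are therefore independent of the Poisson arrival clock $T_n:=\sum_{i=1}^n E_i$ with $E_i\sim\Exp(1)$ iid. Consequently $\tau_K=T_{\sigma_K}$ and $\Xi_K=T_{\xi_K}$; conditioning on the discrete counts yields $\E\bigsqpar{e^{-s\Xi_K}}=G_{\xi_K}\bigpar{(1+s)^{-1}}$ and $\E\bigsqpar{e^{-s\tau_K}}=G_{\sigma_K}\bigpar{(1+s)^{-1}}$, where $G$ denotes the probability generating function. The continuous-time equality therefore forces $G_{\xi_K}=G_{\sigma_K}$ on $(0,1]$, so $\xi_K\eqd\sigma_K$ as required.

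The delicate step is the CBAM branching: one must verify that conditional on the history up to $\min(T_{\mathsf L},T_{\mathsf R})$, the pair $(T_{\mathsf L},T_{\mathsf R})$ consists of two independent copies of $2\Xi_{K-1}$ and the subsequent $\Exp(1)$ gap is independent of the past. Both facts rest on independent Poisson splitting at the root together with the independence of the Bernoulli$(1/2)$ decisions of the random walks from everything else; the CDST analogue is classical and requires no new work.
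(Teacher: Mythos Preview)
Your proof is correct and follows essentially the same route as the paper. Your inductive distributional recursion $Z_K\eqd E+\min(2Z_{K-1}',2Z_{K-1}'')$ is exactly the pair of recursions \eqref{xi3} and \eqref{yy2} in the paper's first proof of \refT{TB}, and your de-Poissonization via the probability generating function is identical to the paper's argument for part~\ref{Tcd}. The only cosmetic difference is that the paper phrases the CDST side through the first-passage percolation variables $\YY_K$ of \refD{Dctt2} (and then invokes \refT{T=} and \refC{CA} to identify $\YY_{K-1}$ with your $\tau_K$), whereas you argue the recursion for $\tau_K$ directly by Poisson thinning on the item stream of \refD{Dctt1}; both derivations are equally short.
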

We show this
using the continuous-time versions; the result then easily
transfers to discrete time too.

Asymptotic properties
of the height  $\he(\dtn)$
of digital search trees have been studied by several authors
\cite{Al_Sh_1988, Drmota2002, DrmotaEtAl-DST,Kn_Sz_2000}. 
In particular, very precise results are proved by
\citet{DrmotaEtAl-DST}. We use these results and \refT{TC} 
to obtain the following result on the distribution of $\xi_K$, which
improves on 
the bounds
$2^{K-2\sqrt{K}+\OO(K^{-1/2})}\le\xi_K \le 2^{K-1+\oo(1)}$ \whp{}
shown in \cite[Theorem 5]{ThackerVolkov}.
\begin{theorem}\label{Txi} As \Ktoo,
  \begin{equation}\label{txi}
\log_2  \xi_K = K -\sqrt{2K} + \frac12\log_2 K -\frac{1}{\ln 2} +
\frac{\log_2K}{4\sqrt{2K}} + \Op\Bigparfrac{1}{\sqrt K}.
  \end{equation}
\end{theorem}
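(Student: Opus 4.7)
My plan is to combine \refT{TC}\ref{Tcd} with the precise asymptotic expansion of the external height of the \dst{} obtained by \cite{DrmotaEtAl-DST}. By \refT{TC}\ref{Tcd}, $\P(\xi_K\le n)=\P(\he(\dtn)\ge K)$. Since $\he(\dtn)$ is non-decreasing in $n$ in the natural coupling where items are added one at a time, this is equivalent to the distributional equality
\begin{equation*}
\xi_K\eqd\min\bigcpar{n\ge 1:\he(\dtn)\ge K}.
\end{equation*}
An asymptotic expansion of $\lgg\xi_K$ can therefore be read off by inverting, in $n$, the asymptotic expansion of $\he(\dtn)$.

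The second step is to plug in the \cite{DrmotaEtAl-DST} expansion, which to the order required here is of the form $\he(\dtn)=\lgg n+\sqrt{2\lgg n}-\tfrac12\lgg\lgg n+c+\alpha\,(\lgg\lgg n)/\sqrt{\lgg n}+\Op\bigpar{1/\sqrt{\lgg n}}$ with explicit constants $c,\alpha$. Writing $L:=\lgg\xi_K$ and using the ansatz $L=K-\sqrt{2K}+u$, I invert by the standard bootstrap. A first pass gives $u=\Op(\lgg K)$; then expand $\sqrt{2L}=\sqrt{2K}\bigpar{1-\sqrt{2/K}+u/K}^{1/2}$ to second order (yielding $\sqrt{2K}-1+u/\sqrt{2K}-1/(2\sqrt{2K})+\Op(1/\sqrt K)$), $\lgg L=\lgg K-2/(\sqrt{2K}\log 2)+\Op(1/\sqrt K)$, and $(\lgg\lgg n)/\sqrt{\lgg n}=(\lgg K)/\sqrt K+\Op(1/\sqrt K)$. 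Substituting these back and collecting contributions at orders $1$, $\lgg K$, $(\lgg K)/\sqrt K$, and $1/\sqrt K$ gives \eqref{txi}, with the numerical coefficients $-1/\log 2$ and $1/4$ in \eqref{txi} dictated respectively by $c$ and $\alpha$ in the \cite{DrmotaEtAl-DST} expansion.

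The only real obstacle is careful bookkeeping in the inversion. To land inside the stated $\Op(1/\sqrt K)$ error one must carry $\sqrt{2L}$ to second order (capturing both the $-1$ and the $-1/(2\sqrt{2K})$ contributions), carry $\lgg L$ to first order (contributing the $1/(\sqrt{2K}\log 2)$ that pairs with the $u/\sqrt{2K}$ correction to supply the constant $-1/\log 2$), and retain the $(\lgg\lgg n)/\sqrt{\lgg n}$ term from \cite{DrmotaEtAl-DST} to obtain the coefficient of $(\lgg K)/(4\sqrt{2K})$. Finally, one verifies that the error $\Op(1/\sqrt{\lgg n})$ in the \cite{DrmotaEtAl-DST} expansion is absorbed into $\Op(1/\sqrt K)$ once $L=K-\sqrt{2K}+\Op(\lgg K)$, which is immediate from $\sqrt{\lgg n}=\sqrt K+\OO(1)$. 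No new probabilistic idea beyond \refT{TC} is required.
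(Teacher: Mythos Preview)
Your overall strategy matches the paper's: combine \refT{TC}\ref{Tcd} with the height results of \cite{DrmotaEtAl-DST} and invert in $n$; the bootstrap arithmetic you sketch is essentially the computation \eqref{ja}--\eqref{jl}. There is, however, a genuine gap at the central step. You assert an expansion
\begin{equation*}
\he(\dtn)=\lgg n+\sqrt{2\lgg n}-\tfrac12\lgg\lgg n+c+\alpha\,\frac{\lgg\lgg n}{\sqrt{\lgg n}}+\Op\Bigparfrac{1}{\sqrt{\lgg n}},
\end{equation*}
but no such statement can hold: $\he(\dtn)$ is an integer, and for generic $n$ the deterministic part above lies at distance of order $1$ from every integer, so the difference cannot be $\Op\bigpar{1/\sqrt{\lgg n}}$. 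What \cite{DrmotaEtAl-DST} actually prove (their Lemmas~13, 14 and Remark~5, as invoked in the paper) is that $\he(\dtn)$ concentrates on $\set{k_H,k_H{+}1}$, with the choice governed by the sign of $\gth-\hth$ at the scale $1/\sqrt{\lgg n}$, where $\gth$ is the fractional part of $\kkk$ and $\hth=\tfrac{3}{4}(\lgg\lgg n)/\sqrt{2\lgg n}$. The $\Op$-statement one needs is therefore not about the random variable $\he(\dtn)$ itself but about the threshold in $n$ at which $\P\bigpar{\he(\dtn)\ge K}$ swings from $0$ to $1$.

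The paper handles this by working at the level of distribution functions rather than through a pointwise expansion of the height: it fixes $K$, takes $n=n_K$ with $\lgg n_K$ equal to the target expansion plus a free parameter $a_K/\sqrt K$, and shows via the elementary expansions \eqref{jf}--\eqref{jl} that $k_H\in\set{K{-}2,K{-}1}$ and $\gth-\hth=(a_K+O(1))/\sqrt K$; the cited lemmas then give $\P(\xi_K\le n_K)=\P\bigpar{\he(\dt_{n_K})\ge K}\to 0$ or $1$ according as $a_K\to-\infty$ or $+\infty$, which is \eqref{txi}. Your bootstrap would reproduce these expansions if rephrased this way, but as written it rests on an input that \cite{DrmotaEtAl-DST} does not supply in the form you state; you also leave $c$ and $\alpha$ unspecified, so the coefficients $-1/\ln 2$ and $1/(4\sqrt{2})$ in \eqref{txi} are never actually verified.
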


For convenience, let
  \begin{equation}\label{mk}
\mk =2^{ K -\sqrt{2K} + \frac12\log_2 K -\frac{1}{\ln2} +
\frac{\log_2K}{4\sqrt{2K}}}.
  \end{equation}
Then, \refT{Txi} says that
$
\log_2\xi_K=\log_2 \mk+ \Op\bigpar{1/\sqrt K}
$,
or, equivalently,
\begin{align}\label{xik2}
  \xi_K = \mk\bigpar{1+\Op\bigpar{K\qqw}}.
\end{align}
\begin{conjecture}\label{CONJE}
  We conjecture that also
\begin{align}\label{conjE}
\E \xi_K 
 = \mk\bigpar{1+\OO\bigpar{K\qqw}}
=2^{ K -\sqrt{2K} + \frac12\log_2 K -\frac{1}{\ln2} +
\frac{\log_2K}{4\sqrt{2K}} + \OO\parfrac{1}{\sqrt K}}.
\end{align}
\end{conjecture}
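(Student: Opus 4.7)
The plan is to express $\E\xi_K$ as a tail integral in the continuous-time model, rescale to the $1/\sqrt K$ fluctuation scale, and evaluate the resulting integral using the sharp distributional asymptotic of \cite{DrmotaEtAl-DST} (which already underlies \refT{Txi}), combined with new uniform large-deviation bounds that supply an integrable dominating function.

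First, because the particles in the \cbam{} arrive at the jump times of a rate-one Poisson process independent of their trajectories, $\Xi_K=T_{\xi_K}$ for the stopping time $\xi_K$, so Wald's identity gives $\E\Xi_K=\E\xi_K$ (finiteness follows from the crude bound $\xi_K\le 2^{K-1+\oo(1)}$ \whp{} proved in \cite{ThackerVolkov} together with an elementary estimate on the extreme upper tail). It therefore suffices to prove the conjectured estimate for $\E\Xi_K$. By \refT{TC}\ref{Tcc} and the substitution $t=\mk(1+u/\sqrt K)$,
\[
\frac{\E\Xi_K}{\mk}-1
=\frac{1}{\sqrt K}\int_{-\sqrt K}^\infty\Bigpar{\P\bigpar{\he(\ct_{\mk(1+u/\sqrt K)})<K}-\ett{u<0}}\,du,
\]
so the conjecture reduces to showing that this integral is $\OO(1)$ uniformly in $K$.

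For each fixed $u$, the integrand converges to an explicit limit given by the Drmota et al.\ sharp asymptotic for $\P(\he(\dtn)<K)$ (transferred to the Poissonized model via the $\OO(\mk^{1/2})=\oo(\mk/\sqrt K)$ concentration of $\Po(t)$ around its mean); this same asymptotic, inverted, is precisely what produces \refT{Txi}. The limit function is continuous, takes values in $(-1,0)$, and (by the precision of the expansion) vanishes at $\pm\infty$, so dominated convergence will yield the result provided one exhibits a uniform-in-$K$ integrable dominating function in $u$.

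Producing such a dominating function is the main obstacle: it amounts to large-deviation bounds for $\he(\ct_t)$ uniform in $K$. For $u\to+\infty$ one needs $\P(\he(\ct_t)<K)\le\varphi(u)$ with $\int\varphi<\infty$; by the first-passage-percolation representation of \refT{T=}, this is the probability that none of the $2^K$ root-to-depth-$K$ paths has completed in FPP with exponential edge passage times of mean $2^k$, and a second-moment or martingale argument on the number of completed depth-$K$ paths should yield at least exponential-in-$u$ decay in the relevant range. For $u\to-\infty$, the complementary upper tail $\P(\he(\ct_t)\ge K)$ at $t\ll\mk$ follows from a union bound over the $2^K$ paths and Markov's inequality on the sum of exponential passage times along each path. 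Establishing these tail bounds with the necessary uniformity -- beyond the weak-convergence statements directly available in \cite{DrmotaEtAl-DST} -- is the essential new input needed to upgrade the convergence-in-probability statement of \refT{Txi} to convergence of means at the same rate $K^{-1/2}$.
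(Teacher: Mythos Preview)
The statement you are attempting to prove is \refConj{CONJE}, which the paper explicitly leaves open; there is no proof in the paper to compare against. The paper's \refR{RE} already records the same diagnosis you arrive at: the dominated-convergence strategy used for \refT{TE} would yield \eqref{conjE} if one had tail estimates much sharper than Drmota's bounds \eqref{drm<}--\eqref{drm>}, and the paper suggests that the method behind \cite[Lemma~14]{DrmotaEtAl-DST} might supply them but does not verify this. Your proposal is thus not an alternative proof but a restatement of the same programme, with the same acknowledged gap.

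On the substance of the gap: your tail-integral rewriting is correct, and the reduction to showing $\int_{-\sqrt K}^\infty\bigl(\P(\he(\ct_{\mk(1+u/\sqrt K)})<K)-\ett{u<0}\bigr)\,du=\OO(1)$ is the right target. However, the two tail sketches you give are not yet proofs. For $u\to+\infty$, a second-moment argument on the number of completed depth-$K$ paths faces the usual difficulty that distinct root-to-leaf paths share many edges near the root, so the covariance structure is delicate; the available bound \eqref{drm>} gives only $\P(\Xi_K>\mk(1+u/\sqrt K))\le e^{-(1+u/\sqrt K)/2}$, which integrates to order $\sqrt K$, not $\OO(1)$, so a genuinely new estimate is required. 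For $u\to-\infty$, the union bound over $2^K$ paths is reasonable, but Markov's inequality controls the \emph{upper} tail of $\sum_k X_k$, not the lower tail $\P(\sum_k X_k\le t)$ that you need; one must instead use exponential tilting or an explicit Laplace-transform computation, and then check that the resulting bound, after multiplication by $2^K$, decays integrably on the $u$-scale. A further subtlety is that the pointwise limit of the integrand need not exist (the height distribution in \cite{DrmotaEtAl-DST} exhibits oscillations via the fractional part $\gth$), so one should aim directly for a uniform bound rather than convergence to a fixed limit function. Until these tail estimates are carried out with full uniformity in $K$, the argument remains at the level of the conjecture.
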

We have not been able to prove \eqref{conjE}, see \refR{RE}, but
as a corollary of \refT{Txi} and tail estimates by \citet{Drmota2002},
we show the following cruder estimate.
\begin{theorem}\label{TE}
As \Ktoo,
  \begin{align}
      \E\xi_K=\E\Xi_K= \etto\mk 
=2^{ K -\sqrt{2K} + \frac12\log_2 K -\frac{1}{\ln2} +o(1)}.
  \end{align}
\end{theorem}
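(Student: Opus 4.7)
The plan is to prove $\E\Xi_K = (1+o(1))m_K$; the equality $\E\xi_K = \E\Xi_K$ then follows by Wald's identity, since $\xi_K$ is a stopping time with respect to the arrival filtration of the \cbam{} and the inter-arrival times are i.i.d.\ $\Exp(1)$ (equivalently, $\Xi_K = T_{\xi_K}$ with $T_j - T_{j-1}$ i.i.d.\ $\Exp(1)$ independent of the content of arrivals).

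By \refT{TC}\ref{Tcc},
\begin{equation*}
\E\Xi_K = \int_0^\infty \P\bigpar{\he(\ctt) < K}\dd t = m_K\int_0^\infty \P\bigpar{\Xi_K/m_K > s}\dd s.
\end{equation*}
From \eqref{xik2} we have $\Xi_K/m_K \pto 1$, so $\P(\Xi_K/m_K > s) \to \ett{s<1}$ for every $s \ne 1$. Since the pointwise limit integrates to $1$, it suffices to exhibit, for all large $K$, an integrable dominator $\psi:[0,\infty)\to[0,1]$ with $\P(\Xi_K/m_K > s) \le \psi(s)$; dominated convergence then yields $\E\Xi_K/m_K \to 1$.

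To construct $\psi$, the idea is to combine Poissonization with the lower-tail estimates of \cite{Drmota2002}. Since $\he(\dtn)$ is non-decreasing in $n$, for any $n_0$,
\begin{equation*}
\P\bigpar{\he(\ctt) < K} \le \P(N_t < n_0) + \P\bigpar{\he(\dt_{n_0}) < K},
\end{equation*}
where $N_t \sim \Po(t)$; choosing $n_0 = \lfloor t/2 \rfloor$, a standard Chernoff bound makes the first term negligible uniformly in $K$ for $t \ge 2m_K$. For the second term, the lower-tail estimates of \cite{Drmota2002} supply a bound of the form $\P(\he(\dtn) < K) \le \psi_0(n/m_K)$, with $\psi_0$ decaying fast enough in its argument to be integrable on $[1,\infty)$; setting $\psi(s) = 1$ on $[0,1]$ and $\psi(s) = \psi_0(s/2) + (\text{Chernoff term})$ for $s \ge 1$ gives the required dominator.

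The main obstacle is technical: locating in \cite{Drmota2002} a quantitative lower-tail bound for $\P(\he(\dtn) < K)$ of the correct form and verifying that, after Poissonization, it furnishes a genuinely integrable dominator in $s = n/m_K$ (not merely one giving $\E\Xi_K = O(m_K)$). A sharper bound would be needed to obtain the rate $\OO(K\qqw)$ in \refConj{CONJE}, which is presumably why the present argument falls short of proving the conjecture.
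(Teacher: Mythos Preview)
Your overall framework---dominated convergence for $\int_0^\infty \P(\Xi_K/m_K>s)\,ds$ after establishing $\Xi_K/m_K\pto1$---is exactly the paper's. But the crucial step, producing the integrable dominator, is left as an ``obstacle'' rather than carried out; as written this is a sketch, not a proof.

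Two concrete points. First, the de-Poissonization you propose is unnecessary: the bound you need is already available for the continuous-time model. In \cite{Drmota2002}, Lemma~4 is stated for the Poissonized distribution function $P_k(x)$, which by \eqref{tcc} is exactly $\P(\Xi_K>x)$ (after the index shift $k=K-2$). So there is no reason to pass through $\P(\he(\dt_{\lfloor t/2\rfloor})<K)$ and Chernoff; you can work directly with $\P(\Xi_K>x)$.

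Second, the paper's actual mechanism for the dominator is worth noting, since it resolves the issue you flag. Drmota's Lemma~4 gives a two-sided bound in terms of a scale $\bar n_K$ (his $n_{K-2}$): roughly $\P(\Xi_K>x)\ge c\,e^{-x/\bar n_K}$ for $x\le\bar n_K$ and $\P(\Xi_K>x)\le e^{-x/(2\bar n_K)}$ for $x\ge\bar n_K$. The scale $\bar n_K$ is not a priori known to be $\sim m_K$, so one cannot immediately write the upper bound as a function of $x/m_K$. The paper closes this gap by combining the two bounds with the already-proved $\Xi_K/m_K\pto1$: if $\bar n_K/m_K$ stayed away from $1$, one of the two tail bounds would contradict $\P((1-\eps)m_K<\Xi_K<(1+\eps)m_K)\to1$. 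This forces $\bar n_K/m_K\to1$, after which $\P(\Xi_K/\bar n_K>x)\le \ett{x<1}+e^{-x/2}$ is the dominator and dominated convergence finishes. Your plan would become a complete proof if you replaced the de-Poissonization step by this direct use of \cite[Lemma~4]{Drmota2002} and added the $\bar n_K\sim m_K$ argument.
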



The \bam{} was introduced as \emph{internal erosion} by  \citet{Le_Pe_2007}.  
In \cite{ThackerVolkov}, the border aggregation model was studied
on a variety of graphs, and several interesting results were obtained.
One reason of the interest in the \bam{} is its
possible connections to other interesting models in statistical
physics, in particular the classical 
\emph{diffusion limited aggregation} (DLA)
\cite{Wi_Sa_1983,Ke_1987, Ba_Pe_Pe_1997}, and
\emph{internal diffusion limited aggregation} (IDLA)
\cite{LawlerBG, Le_Si_2018, Si_2019}. 
It is conjectured \cite{Le_Pe_2007} that
on $\bbZ^2$     DLA and the border aggregation model are
"inversions" of each other in some sense; however, no rigorous results are
known. 
(Nevertheless, \cite{ThackerVolkov} uses
bounds obtained in \cite{Ke_1987} for DLA to obtain
 results for the border aggregation model.)
Note that
the \dst{} can be regarded as IDLA on the infinite binary tree
(see \refS{SSdst});
moreover, it can also be regarded as DLA on the same infinite binary tree,
see \citet[Lemma 1.3]{Ba_Pe_Pe_1997}. 
Thus, our results show a connection between the \bam{} and IDLA or DLA
on  trees. (Note that on $\bbZ^d$, IDLA is very different
from DLA and the \bam, with asymptotically a round shape
\cite{LawlerBG}.)

The rest of the paper is organized as follows.
\refS{Sprel} contains definitions and other preliminaries.
\refS{Scdst+} gives the equivalence of the 
different constructions of the \cdst. 
\refS{STB} contains the coupling of the \dst{} and the \bam,  
leading to the proof of \refT{TC},
and then 
\refS{SpfTxi} gives the proofs of \refTs{Txi} and \ref{TE}.
\refS{Sdary} discusses briefly extensions to the $b$-ary case.

\section{Preliminaries}\label{Sprel}

We recall some standard notation, adding some perhaps less standard details.

\subsection{General}

 
$\Exp(\gl)$ denotes an exponential distribution with \emph{rate} $\gl$,
i.e., with the density function $\gl e^{-\gl x}$, $x>0$, 
and thus the expectation  $1/\gl$. 

$\Op(a_n)$, where $a_n$ is a given positive sequence, denotes some sequence
of random variables $X_n$ such that the family$\set{X_n/a_n}$ is bounded in
probability, \ie,
$\lim_{C\to\infty}\sup_n\P(|X_n|>C a_n)=0$.

$\go(1)$ denotes a sequence tending to $+\infty$.

$x\land y$ denotes $\min\set{x,y}$.

\subsection{Binary trees}

An \emph{(extended) binary tree} is a rooted tree where each node has either 0
or two children; in the latter case there is one left child and one right
child.
Nodes with 0 children (leaves) are called \emph{external nodes} and nodes
with 2 children are called \emph{internal nodes}.

Let $\Vi(T)$ denote the set of internal nodes of $T$, and $\Ve(T)$ the set of
external nodes.

The root of a binary tree is denoted $o$.
The depth $d(v)$ of a node in a binary tree is the distance from $v$ to the
root $o$; thus $d(o)= 0$.

If $v$ and $w$ are nodes in a binary tree $T$, 
then $v\preceq w$ means that $v$
is on the path from the root to $w$ (including the endpoints).

Unless we say otherwise, we consider only finite binary trees.
However, we let $\too$ denote the infinite binary tree where each node
has two children. 
Thus, $\too$ has
$2^k$ nodes of depth $k$, $k\ge0$.
Every finite binary tree can be regarded as a subtree of $\too$.

The size $|T|:=|\Vi(T)|$ of an extended binary tree is the number of internal
nodes. 
Thus an extended binary tree of size $n$ has $n$ internal and $n+1$ external
nodes.

A binary tree is \emph{empty} if it has size 0, i.e., if there is no
internal node and only a single external node (the root).

The (external) 
height $\he(T)$ of a binary tree $T$ is the maximum depth of an 
external
node, \ie, (with $\max\emptyset:=-1$ for the empty tree)
\begin{equation}
  \he(T):=\max\bigset{d(v):v\in \Ve(T)}
=\max\bigset{d(w):w\in \Vi(T)}+1.
\end{equation}

$T_K$ is the complete binary tree of height $K$; it has 
$2^K$ external nodes, all at depth $K$, and thus
$2^K-1$ internal nodes.

\begin{remark}
It is also common to study binary trees without external nodes;
we may call them \emph{reduced binary tree}. 
The subtree of internal nodes in an extended binary tree is a reduced binary
tree (including the case of a reduced empty tree with no nodes),
and this gives an obvious $1$--$1$ correspondence between
extended and reduced binary trees.
In the present papers, all binary trees are extended binary trees as defined
above.
\end{remark}

\subsection{A random walk}\label{SSrandomwalk}

Given an extended binary  tree $T$, consider the random walk defined
by starting at the root, and then moving repeatedly from the current node to
one of its children, chosen at random with probability $1/2$ each
(independently of previous choices), until we reach an external node.

For an external node $v$, 
let $p_v$ be the probability that this random walk ends in $v$.
Thus $(p_v)_{v\in\Ve(T)}$ is a probability distribution on $\Ve$, which we call the
\emph{harmonic measure} of $T$.
Obviously, the harmonic measure is given by
\begin{equation}\label{harmonic}
  p_v=2^{-d(v)},
\qquad v\in \Ve(T).
\end{equation}

By construction, the harmonic measure is a probability measure, and thus,
for any finite binary tree $T$,
\begin{equation}\label{harmsum}
  \sum_{v\in\Ve(T)}2^{-d(v)}=1.
\end{equation}
(Alternatively, \eqref{harmsum} is easily seen by induction on the size $|T|$.)

\subsection{Boundaries}\label{SSboundary}

We say that a finite
set $B$ of nodes in $\Vi(\too)$ is a \emph{boundary}, if every
infinite path from the root contains exactly one element of $B$.
The set of external nodes $\Ve(T)$ of a finite binary tree is a boundary;
conversely, given a boundary $B$, there exists exactly one binary tree $T$ with
$B=\Ve(T)$. (The internal nodes of $T$ are the nodes $v$ that are strict 
ancestors of some node $w\in B$.) 
Hence there is a $1$--$1$ correspondence between
(finite) binary trees and boundaries, given by $T\leftrightarrow\Ve(T)$. 

Given a boundary $B$, the harmonic measure \eqref{harmonic}
on the corresponding tree is a
probability measure on $B$, which we also call the 
\emph{harmonic measure on  $B$}.


\subsection{Digital search trees}\label{SSdst}

A \emph{digital search tree} is a binary tree 
constructed recursively from
a sequence of $n\ge0$ infinite binary strings $W_1,\dots,W_n$ 
(called \emph{items}) as follows; 
the digital search tree has size $n$ and each
internal node stores one of the items.
See \eg{}
\cite[Section 6.3]{KnuthIII},
\cite[Section 6.1]{Mahmoud-book},
\cite[Section 1.4.3]{Drmota-book},
\cite[Section 6.4]{JacquetS-book}.

\begin{definition}\label{Ddtn0}
The digital search tree is constructed as follows.
  \begin{romenumerate}
\item 
Start with an empty binary tree, containing only the root as an external
node.
\item 
The items $W_i$ arrive one by one, in order;
each item comes first to the root of the tree.
\item 
When an item comes to an external node, it is stored there.
The node becomes internal and two new
external nodes are added as children to it.
\item 
When an item $W_i$
comes to an internal node $v$ at depth $d$, it is passed to the 
left [right]
child of $v$ if the $(d+1)$th bit of $W$ is 0 [1].
The construction proceeds recursively until an external node is reached.
\end{romenumerate}
\end{definition}


We shall only consider the random case, where
each string $W_i$ is a random string of independent bits, each with the
symmetric $\Be(1/2)$ distribution, and furthermore the strings are
independent.
We let $\dtn$ denote the random digital search tree constructed from such
strings, and we consider the sequence $(\dtn)\xoo$ constructed from an
infinite sequence of items $(W_n)_1^\infty$.

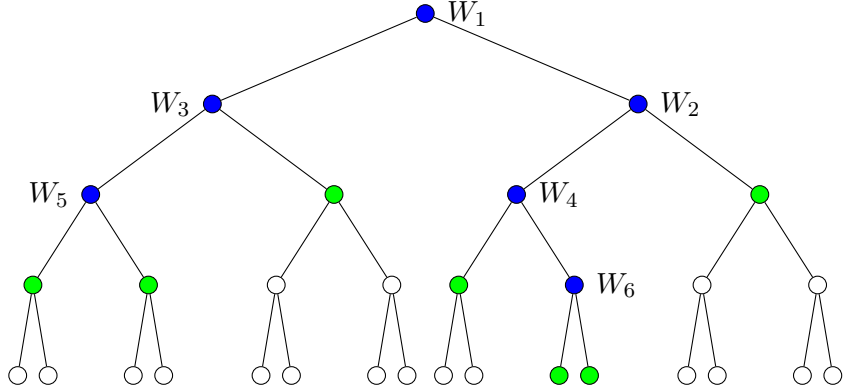
\begin{figure}[h]
\begin{center}
 \begin{tikzpicture}[scale=0.8, every node/.style={circle, inner sep=2.35pt, draw=black, fill=,},
 level 1/.style={sibling distance=7cm, fill=white,},
        level 2/.style={sibling distance=4cm, },
        level 3/.style={sibling distance=1.9cm, },
        level 4/.style={sibling distance=.5cm}]
        \node()[fill=blue, label=right:$W_1$]{}
        child{node (0) [fill=blue, label=left:$W_3$]{}
            child{node (00)[fill=blue, label=left:$W_5$] {}
                child{node (000)[fill=green] {}
                    child{node (0000){}}
                    child{node(0001){}}}
                child{node (001)[fill=green]{}
                    child{node (0010){}}
                    child{node(0011){}}}}
            child{node (01)[fill=green] {}
                child{node(010){}
                    child{node(0100){}}
                    child{node(0101){}}}    
                child{node(011){}
                    child{node(0110){}}
                    child{node(0111){}}}
            }}
        child{node (1)[fill=blue, label=right:$W_2$] {}
            child{node (10)[fill=blue, label=right:$W_4$] {}
                child{node (100)[fill=green] {}
                    child{node(1000){}}
                    child{node(1001){}}}
                child{node (101)[fill=blue, label=right:$W_6$]{}
                    child{node (1010)[fill=green] {}}
                    child{node (1011)[fill=green] {}}}}
            child{node (11)[fill=green] {}
                child{node(110){}
                    child{node(1100){}}
                    child{node(1101){}}}
                child{node(111){}
                    child{node(1110){}}
                    child{node(1111){}}}}};					
\end{tikzpicture} 
\caption{
Digital search tree for $6$ items; $W_1= \{01011\ldots\}$, $W_2=\{10011\ldots\}, W_3=\{00101\ldots\}, W_4=\{10110\ldots\},W_5=\{00011\ldots\}, W_6=\{10100\ldots\}$. The green nodes are the external nodes and the blue nodes are the internal nodes. 
}\label{Fig: Digital_search_tree}
\end{center}	
\end{figure}

It is obvious from the definitions, that when constructing the random
digital search tree $\dtn$,
the $i$th string $W_i$ performs a random walk on $\dt_{i-1}$
as described in \refSS{SSrandomwalk}.
(Hence, the \dst{} equals IDLA for this directed random walk, as said in the
introduction.)
Consequently, the sequence of random digital search trees $(\dtn)\xoo$ can
also be
defined as follows, without explicitly using random strings.

\begin{definition}\label{Ddtn}
  The random digital search trees $\dtn$, $n\ge0$, 
are constructed recursively, starting with $\dt_0$
  empty.
$\dt_{n+1}$ is obtained from $\dtn$ by choosing an external node $v$ in $\dt_n$
at random according to the harmonic measure \eqref{harmonic} and converting
this node $v$ to an internal node by adding two (external) children to it. 
\end{definition}

\subsection{Continuous-time digital search trees}\label{SScdst}

We think of item $W_i$ as arriving at time $i$, and $(\dtn)\xoo$ as a
stochastic process of trees in discrete time.
It is, as often in similar problems, 
useful to consider also the corresponding process in
continuous time, with items arriving according to a Poisson process with
rate 1.
This means that item $W_n$ arrives at a random time $\taux n$, where the
waiting times $\eta_n:=\taun-\taux{n-1}$ (with $\taux0=0$)
are \iid{} $\Exp(1)$.

\begin{definition}\label{Dctt1}
Let the sequence $(W_n)_n^\infty$ of random items 
arrive according to a Poisson process with rate 1 on $\ooo$.
(As above, the strings $W_n$ are independent, with independent $\Be(1/2)$ bits.)
The continuous-time digital search tree $\ctt$ is the digital search tree  
constructed from the items $W_i$ that have arrived until time $t$.
\end{definition}

Equivalently, we can use \refD{Ddtn}, adding new nodes at times
given by a Poisson process.

Let $N(t)$
be the number of items that have arrived up to time $t$; thus
$N(t)\sim\Po(t)$, and $\ctt$ is the random digital search tree constructed
from a random number $N(t)$ items.
More precisely, the discrete and continuous-time processes $(\dt_n)_n$ and
$(\ctt)_t$ are related by
\begin{align}\label{dtct}
  \ctt&=\dt_{N(t)}
\quad(t\ge0),
&
\dt_n&=\ct_{\taux n}
\quad(n\ge0).
\end{align}
In other words, $\ctt$ is obtained from  $\dtn$ by Poissonization.

Note that $\taun$ is the stopping time when the size $|\ctt|$ becomes $n$.

\subsection{The border aggregation model}\label{SSbam}
\emph{Border aggregation models} on finite connected graphs were studied by
\citet{ThackerVolkov}.  
In general, consider any finite, connected graph with a fixed
vertex $o$,  the \emph{origin}, and a non-empty \emph{boundary} set
denoted by $B$. 
As in the introduction,
we recursively define a randomly growing sequence of
sets of \emph{sticky}  vertices $S_n$ as follows.
\begin{definition}
  \label{Dsticky}
Construct random \emph{sticky sets} $S_n$, $n\ge0$,
as follows.
\begin{romenumerate}
\item 
$S_0=B$, the given boundary.
\item
At times $n=1,2,\dots$,
given $S_{n-1}$, 
let a particle start at $o$ and perform 
some sort of random walk 
until it reaches a neighbour $v_n$ of the sticky set. Then it stops, 
and the node $v_n$
is added to the sticky set, \ie,  $S_{n+1}:=S_{n-1}\cup\set{v_n}$.
\item This is repeated until some time $\xi_K$ when the root becomes sticky;
  then the process  stops.
\end{romenumerate}
\end{definition}
Thus, $\xi_K$ is the number of particles required to build a path from the
boundary to the origin by this aggregation process.
We are (as \citet{ThackerVolkov}) interested in the distribution of $\xi_K$.

This model was introduced as \textit{internal erosion} by  \citet{Le_Pe_2007}.  
In the present paper we consider only the case described in the introduction, 
when the graph is the binary tree $T_K$ and the random walk is the directed
random walk in \refS{SSrandomwalk}.

We use also a continuous-time version of the \bam.

\begin{definition}\label{Dcsticky}
  The \cbam{}  is defined as in 
\refD{Dsticky}, but with particles arriving according to a Poisson process
with rate 1. Let $\cxi_K$ be the time this process stops.
(We assume that the random walk takes no time.)
\end{definition}

Thus, with the notation in \refS{SScdst},
\begin{equation}\label{xixi}
  \cxi_K = \tau(\xi_K)=\sum_{i=1}^{\xi_K} \eta_i,
\end{equation}
where $\eta_i\sim\Exp(1)$ are \iid{} and independent of $\xi_K$.
In particular,
\begin{equation}\label{exixi}
  \E\cxi_K=\E\xi_k.
\end{equation}

\section{More on \cdst{s}}\label{Scdst+}

We give first an alternative construction of the continuous-time digital
search tree $\ctt$ and then show that it agrees with \refD{Dctt1}.

\begin{definition}\label{Dctt2}
Equip each node $v$ in the infinite binary tree $\too$ with a random
variable $X_v\sim\Exp(2^{-d(v)})$, with all $X_v$ independent.
Let 
\begin{equation}\label{Yv}
  Y_v:=\sum_{w\preceq v} X_w,
\qquad v\in \Voo,
\end{equation}
and let $\ctt$ be the extended binary tree with
\begin{equation}\label{dctt2}
  \Vi(\ctt):=\set{v\in \Voo:Y_v\le t}.
\end{equation}
\end{definition}

\begin{remark}\label{Rfpp}
We may interpret the internal nodes in $\ctt$ 
as infected; then \refD{Dctt2} 
describes an infection
that spreads randomly on $\too$ from parents to children,
starting with the root $o$ being infected from the outside, 
where $X_v$ is the time it takes for node $v$ to become infected
once its parent is. (Imagine the root having an outside parent that is
infected at time 0.)
In other words,
$\ctt$
can be seen as first-passage percolation on $\too$,
but note that different edges have different distributions of the 
infection times $X_v$.  
\end{remark}

To see the equivalence of the two definitions, we introduce a third, and
then show that all three are equivalent.

\begin{definition}\label{Dctt3}
Equip each node $v\in\too$ with an
exponential clock that rings with rate $2^{-d(v)}$, independently of all
other clocks.  
Start with $\ct_0$ empty.
Ignore all clocks that are not currently in an external node.
When a clock rings in an external node $v$, then $v$ becomes an internal node
of $\ctt$ and its two children become new external nodes.
\end{definition}

\begin{remark}  \label{RAldousShields}
More generally, \citet{Al_Sh_1988} studied a process defined as in
\refD{Dctt3} but with rates $c^{-d(v)}$ for some constant $c>1$.
(See \cite{Ba_Pe_Pe_1997} for $c<1$.) 
They noted that this is equivalent to \refD{Dctt2} (with these rates),
and that that the process is a random time change of the corresponding
discrete-time process defined as in \refD{Ddtn}, but using instead of the
harmonic measure \eqref{harmonic} on the external nodes the measure where
$p_v$ is proportional to $c^{-d(v)}$.
Note that the simple relation \eqref{harmsum} is special for the case $c=2$,
and thus the relation between the discrete and continuous-time models is in
general more complicated than in \refD{Dctt1}.
\end{remark}

\begin{theorem}[Essentially \citet{Al_Sh_1988}]\label{T=}
   Definitions \ref{Dctt1}, \ref{Dctt2} and \ref{Dctt3}
define the same stochastic process of trees $(\ctt)_{t\ge0}$.
(In the sense of all having the same distribution.)
\end{theorem}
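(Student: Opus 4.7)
My plan is to verify the two equivalences \refD{Dctt2}$\equiv$\refD{Dctt3} and \refD{Dctt1}$\equiv$\refD{Dctt3} separately; together these yield the theorem.

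For \refD{Dctt3}$\Rightarrow$\refD{Dctt2}: in the clock-based construction, let $T_v$ denote the random time at which $v$ first appears as an external node, with $T_o:=0$, so that $T_v=T_{w}+X'_{w}$ for $w$ the parent of $v$ and $X'_w$ the waiting time from $T_w$ until $w$'s clock next rings. By memorylessness of the exponential clock at each node, $X'_v\sim\Exp(2^{-d(v)})$ is independent of the past of every clock and of all other clocks. Applying this recursively from the root downward, $\{X'_v\}_{v\in\Voo}$ is mutually independent with the correct marginals, and $v$ becomes internal at time $T_v+X'_v=\sum_{w\preceq v}X'_w$, which is the $Y_v$ of \refD{Dctt2}. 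Thus the tree-valued processes produced by \refD{Dctt2} and \refD{Dctt3} coincide in law.

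For \refD{Dctt1}$\equiv$\refD{Dctt3}: for each $v\in\Voo$ with bit-string label $s_v$ of length $d(v)$, let $\Pi_v$ be the sub-process of the arrival Poisson process consisting of those items whose random string $W$ has $s_v$ as a prefix. Since the bits of $W$ are iid $\Be(1/2)$ and independent of the arrival times and of other items, $\Pi_v$ is a Poisson process of rate $2^{-d(v)}$, and at each $v$ one has $\Pi_v=\Pi_{v_{\mathsf L}}\sqcup\Pi_{v_{\mathsf R}}$ obtained by an independent $\Be(1/2)$ labelling of the points of $\Pi_v$. In the Dctt1 construction, as long as $v$ is external, an arriving item reaches $v$ (and is absorbed there) iff it belongs to $\Pi_v$; hence once $v$ first becomes external at time $T_v$, the first subsequent point of $\Pi_v$ is the arrival that turns $v$ into an internal node, and by memorylessness the waiting time has the same distribution as the $\Exp(2^{-d(v)})$ clock of \refD{Dctt3}.

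The delicate point is the joint independence of these waiting times across different $v$, since the family $\{\Pi_v\}$ is itself not mutually independent (nested in ancestor/descendant pairs). I handle this by induction on depth, using the strong Markov property and memorylessness of Poisson processes at each absorption time: when the first point of $\Pi_v$ after $T_v$ absorbs at $v$ at time $Y_v$, the remainder of $\Pi_v$ on $(Y_v,\infty)$ is a fresh rate-$2^{-d(v)}$ Poisson process independent of everything prior, and the Bernoulli$(1/2)$ marking splits it into two independent rate-$2^{-d(v)-1}$ Poisson processes that drive the evolution on the two subtrees rooted at $v$'s children. Chained across all nodes of $\too$, this delivers exactly the joint independence of the $X_v$ required in \refD{Dctt2}, which completes the identification with \refD{Dctt3} and hence with \refD{Dctt1}.
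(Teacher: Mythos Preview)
Your proof is correct. The argument for \refD{Dctt2}$\equiv$\refD{Dctt3} is essentially identical to the paper's: set $\tau_v$ (your $T_v$) to be the time $v$ first becomes external, let $X_v$ be the subsequent wait for $v$'s clock, invoke memorylessness for the marginals and independence, and observe that $v$ becomes internal at $\sum_{w\preceq v}X_w=Y_v$.

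For \refD{Dctt1}$\equiv$\refD{Dctt3} the two arguments genuinely differ. The paper works from \refD{Dctt3} towards \refD{Dctt1}: it notes that by \eqref{harmsum} the total rate of clocks on the current external boundary is always exactly $1$, so internal nodes are created at the times of a rate-$1$ Poisson process, and by the competing-exponentials property the external node chosen is distributed according to the harmonic measure \eqref{harmonic}; this is precisely the mechanism in Definitions~\ref{Ddtn}--\ref{Dctt1}. You instead work from \refD{Dctt1} towards \refD{Dctt2}: you thin the arrival process into the prefix-processes $\Pi_v$, identify the wait at each $v$ with the first point of $\Pi_v$ after $T_v$, and then establish the joint independence of these waits by a recursive strong-Markov/Poisson-splitting argument down the tree. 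Your route is longer and requires the careful independence discussion you flagged, but it has the advantage of giving an explicit coupling between the \refD{Dctt1} construction and the variables $X_v$ of \refD{Dctt2}; the paper's route is shorter because the identity \eqref{harmsum} does the work in one line.
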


\begin{proof}
In \refD{Dctt3}, the total rate of the clocks in the external nodes is
always 1, by \eqref{harmsum}. Hence, new internal nodes are created with
rate 1.
Furthermore,
if $v$ is an external node, then the clock at $v$ rings with rate
$2^{-d(v)}$, and thus the probability that the clock at $v$ is the next
clock in an external node that rings is also $2^{-d(v)}$.
In other words, when a new internal node is added, it is chosen randomly
among the existing external nodes according to the harmonic measure
\eqref{harmonic}, just as in \refD{Ddtn}.
Hence the process $(\ctt)$ constructed in \refD{Dctt3} has the same
distribution as the one defined in Definitions \ref{Ddtn0}--\ref{Dctt1}.

Furthermore, in \refD{Dctt3}, consider for each node $v\in\too$ the 
stopping time,
$\tau_v$ say,
when $v$ becomes an external node, and let $X_v$ be the waiting
time until the next time the clock at $v$ rings. Then 
$X_v$, $v\in\too$, are independent exponential random variables with the
rates in \refD{Dctt2}. Furthermore, since $\tau_v$ is the time the parent 
of
$v$ becomes an internal node (with $\tau_0=0$ for the root), 
it follows by induction that the time $\tau_v+X_v$ when the clock rings and
$v$ becomes an internal node equals $Y_v$ defined in \eqref{Yv}, and thus
\eqref{dctt2} holds and
the process $(\ctt)_t$ coincides with the one defined by  \refD{Dctt2}.
\end{proof}

In particular, this gives a description of the height $\he(\ctt)$ of $\ctt$,
and thus indirectly also of $\he(\dtn)$.
Use \refD{Dctt2} and let, for $k\ge0$, 
\begin{equation}
  \label{YY}
\YY_k:=\min_{v:\,d(v)=k}Y_v.
\end{equation}
In other words, $\YY_k$ is the smallest sum $\sum_w X_w$ along a path from the
root to a node of depth $k$; 
in the language of \refR{Rfpp},
$\YY_k$ is the time the infection reaches depth $k$.
(I.e., it reaches external height $k+1$.)

\begin{corollary}\label{CA}
We have the equality in distribution, for all $t\ge0$,
\begin{equation}\label{ca}
  \he(\ctt)\eqd 
\min\bigset{k\ge0: \YY_k> t}
=\max\bigset{k\ge0: \YY_k\le t}+1.
\end{equation}
Equivalently, for any $t\ge0$ and $k\ge0$,
\begin{equation}\label{ca2}
  \P\bigpar{\he(\ctt)> k}=\P\bigpar{\YY_k\le t}.
\end{equation}
\end{corollary}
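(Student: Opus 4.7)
The plan is to work directly from the first-passage representation provided by \refD{Dctt2}, which by \refT{T=} gives a version of $(\ctt)_{t\ge0}$ having the same distribution. So throughout I fix this construction and prove \eqref{ca2} as an \emph{almost sure} identity of events; the distributional equality \eqref{ca} then follows immediately.

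First I would record two monotonicity observations. Since every $X_w>0$, the definition \eqref{Yv} implies that $Y$ is strictly increasing along any root-to-leaf path, i.e., if $w\preceq v$ then $Y_w\le Y_v$ with equality only when $w=v$. Consequently the set $\Vi(\ctt)=\{v:Y_v\le t\}$ from \eqref{dctt2} is \emph{ancestor-closed}: whenever $v\in\Vi(\ctt)$, every ancestor of $v$ is also in $\Vi(\ctt)$, so $\ctt$ is indeed a (sub)tree of $\too$. Moreover, $\YY_k=\min_{d(v)=k}Y_v$ is nondecreasing in $k$, because any path to a node at depth $k+1$ extends a path to a node at depth $k$.

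Next I would translate the event $\{\he(\ctt)>k\}$ using the identity $\he(T)=\max\{d(w):w\in\Vi(T)\}+1$ recalled in \refSS{SSboundary}. Thus $\he(\ctt)>k$ is equivalent to the existence of an internal node of $\ctt$ at depth $\ge k$. By the ancestor-closedness above, this occurs iff there is an internal node at depth exactly $k$, which by \eqref{dctt2} means some $v$ with $d(v)=k$ satisfies $Y_v\le t$. By the definition \eqref{YY} of $\YY_k$ this is precisely $\YY_k\le t$. Taking probabilities yields \eqref{ca2}.

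Finally, for \eqref{ca}, the monotonicity of $\YY_k$ in $k$ and the equivalence $\{\he(\ctt)>k\}=\{\YY_k\le t\}$ give, for each $\go$,
\[
\he(\ctt)=\#\{k\ge 0:\YY_k\le t\}=\max\{k\ge0:\YY_k\le t\}+1=\min\{k\ge0:\YY_k>t\},
\]
interpreting the max as $-1$ if $\YY_0>t$. Since Definitions \ref{Dctt2} and \ref{Dctt1} agree only in distribution, this is an equality in distribution for the original process, which is exactly \eqref{ca}. I do not anticipate any genuine obstacle; the only care needed is the passage from ``internal node at depth $\ge k$'' to ``internal node at depth exactly $k$'' via ancestor-closedness, and the off-by-one bookkeeping between the internal and external height conventions.
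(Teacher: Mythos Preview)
Your proof is correct and follows essentially the same route as the paper: work in the first-passage construction of \refD{Dctt2}, show the event identity $\{\he(\ctt)>k\}=\{\YY_k\le t\}$ (the paper states the complementary form $\{\he(\ctt)\le k\}=\{\YY_k>t\}$ in one line), deduce \eqref{ca} as an actual equality in that model, and then invoke \refT{T=} for the distributional statement. Your explicit remarks on ancestor-closedness and the monotonicity of $k\mapsto\YY_k$ just spell out what the paper leaves implicit; one small slip is the cross-reference---the identity $\he(T)=\max\{d(w):w\in\Vi(T)\}+1$ is in the binary-trees subsection, not \refSS{SSboundary}.
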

\begin{proof}
  \refD{Dctt2} and \eqref{YY} yield the relation, for $k\ge0$,
  \begin{equation}\label{hy}
\set{\he(\ctt)\le k} 
=\set{v\notin \Vi(\ctt) \text{ when $d(v)=k$}}
=\set{\YY_k> t}.
  \end{equation}
Hence, using \refD{Dctt2}, \eqref{ca} holds with actual equality of the
random variables. By \refT{T=}, we have equality in distribution for any
of the definitions.
\end{proof}


\section{Connection with the border aggregation model}
\label{STB}
\begin{theorem}\label{TB}
  For any $K\ge0$,
$\cxi_{K+1}\eqd\YY_K$.
\end{theorem}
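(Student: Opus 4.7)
The plan is to couple the continuous-time BAM on $T_{K+1}$ with the FPP construction of the CDST in \refD{Dctt2} via a shared family of exponentials, and then to unroll a recursion.

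First I would invoke the clock formulation analogous to \refD{Dctt3}: at each instant, conditional on the currently active boundary $\tilde B$ (the neighbours of $S$ having no ancestor that is a neighbour), the next vertex to become sticky is drawn with probability proportional to $2^{-d(v)}$, and by \eqref{harmsum} the total rate equals $1$. Exponential memorylessness then lets me realise the BAM by assigning each $v\in T_{K+1}$ with $d(v)\le K$ an independent $X_v\sim\Exp(2^{-d(v)})$, representing the waiting time from $v$'s activation (entry into $\tilde B$) until $v$ sticks. Every depth-$K$ node is active from time $0$, so sticks at time $X_v$.

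Second, I would use Poisson splitting at the root: each particle arriving at $o$ goes left or right with probability $1/2$, so until the root first enters $\tilde B$ the evolutions in the left and right subtrees of $T_{K+1}$ are independent standalone BAMs (with the original rates $2^{-d(v)}$). Writing $\tilde T_v$ for the sticky time of $v$ in the standalone BAM on the subtree rooted at $v$, this gives $A_o=\min(\tilde T_L,\tilde T_R)$ with $L,R$ the children of $o$. Once $o\in\tilde B$, the identity \eqref{harmsum} forces $\tilde B=\{o\}$, and an additional $X_o\sim\Exp(1)$ elapses before $o$ sticks. Iterating this at every internal depth gives
\begin{equation*}
\tilde T_v = X_v+\min(\tilde T_{c_1},\tilde T_{c_2})\quad\text{for }d(v)<K,\qquad \tilde T_v=X_v\quad\text{for }d(v)=K,
\end{equation*}
with $c_1,c_2$ the children of $v$. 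Unrolling,
\begin{equation*}
\cxi_{K+1}=\tilde T_o=\min_{v:\,d(v)=K}\sum_{w\preceq v}X_w,
\end{equation*}
which by \refD{Dctt2} and \eqref{YY} is distributed exactly as $\YY_K$.

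The delicate point will be justifying $A_o=\min(\tilde T_L,\tilde T_R)$, since in the full BAM on $T_{K+1}$ one of $L,R$ may never actually become sticky (if the other sticks first and the root then sticks). The Poisson-splitting argument is what resolves this: before root activation the two subtrees are driven by two independent half-rate Poisson streams of particles and therefore evolve as independent standalone BAMs, so the first-to-stick side does equal $\min(\tilde T_L,\tilde T_R)$ irrespective of what would have happened after. All remaining ingredients are routine exponential-race arguments and recursion unwinding.
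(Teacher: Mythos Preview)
Your proposal is correct and is essentially the paper's first proof: Poisson splitting at the root gives the distributional recursion $\cxi_{K+1}\eqd X_o+\min(\tilde T_L,\tilde T_R)$ with the three pieces independent, and you then unroll this recursion down to depth $K$ to obtain $\min_{d(v)=K}\sum_{w\preceq v}X_w$ directly, whereas the paper instead checks that $\YY_K$ satisfies the same recursion \eqref{yy2} and concludes by induction. One small caution: your sentence about ``realising the BAM by assigning each $v$ an independent $X_v$'' is best read as a statement about the distributional recursion rather than a literal coupling, since in the Poisson-splitting picture the loser's $X_v$ and the root's $X_o$ share future arrivals; only $X_o$ and $\min(\tilde T_L,\tilde T_R)$ are genuinely independent, which is all your unrolling needs. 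The paper's second proof does build a true pathwise coupling, via clocks on the absorption set $A_t$ and the quantities $Z_v$ in \eqref{zv}, yielding $\cxi_{K+1}=\YY_K$ almost surely rather than just in distribution; your argument does not provide this, but it is not required for the theorem as stated.
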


We give two proofs of this theorem. The first uses a simple induction.
The second is longer but perhaps gives more insight; it is more
combinatorial and is based on a study of the aggregation process.
The second proof also provides a coupling of the two processes.

\begin{proof}[First proof of \refT{TB}]
  The claim is trivially true for $K=0$: $\cxi_1$ is the time of arrival of
  the first particle, so $\cxi_1\sim\Exp(1)$ and
$\cxi_1\eqd X_o=\YY_0$.

Denote the two children of the root by $\oL$ and $\oR$.
Consider the \cbam{} on $T_{K+1}$, 
and let $\xxi$ be the time
$\oL$ or $\oR$ becomes sticky.
Then the next particle stops at the root, and thus 
\begin{equation}
  \label{xi1}
\xi_{K+1}=\xxi+X, 
\end{equation}
where
$X\sim\Exp(1)$ is independent of $\xxi$.

Up to time $\xxi$, the particles
proceed to $\oL$ or $\oR$, with
probability $1/2$ each and independently of each other and of the arrival
times of  the particles.
By a standard property of Poisson processes, 
this means that $\oL$ and $\oR$ are fed particles by two independent Poisson
processes with rates $1/2$. 
Let both these processes continue beyond $\xxi$, and let 
$\xil$ and $\xir$ be the times $\oL$ and $\oR$, respectively, then become
sticky. Then 
\begin{equation}\label{xi2}
\xxi=\xil\land\xir.  
\end{equation}
Moreover, the two processes beneath $\oL$ and $\oR$ are independent copies
of the original process on the smaller tree $T_K$, with time running at half
speed. Hence, $\xil\eqd\xir\eqd 2\cxi_K$, and thus by \eqref{xi1} and
\eqref{xi2}, 
\begin{equation}
  \label{xi3}
\cxi_{K+1}\eqd 2\bigpar{\cxi_K\land\cxi_K'}+X
\end{equation}
with $\cxi_K'\eqd\cxi_K$, $X\sim\Exp(1)$ and $\cxi_K$, $\cxi'_K$, $X$
independent.  

Similarly, recalling the definition \eqref{YY} of $\YY_K$, 
let $\YYL$ and $\YYR$ be the smallest sum $\sum X_v$ along a path from $\oL$
or $\oR$, respectively, to a node of depth $K$. Then
\begin{equation}\label{yy1}
  \YY_K=(\YYL+X_o)\land(\YYR+X_o)
= \YYL\land\YYR+X_o.
\end{equation}
Moreover, $\YYL$ and $\YYR$ are independent and both have the same
distribution as $2\YY_{K-1}$, since the subtree of descendants of $\oL$ (or
$\oR$), equipped with their $X_v$ 
is isomorphic to the full tree with root $o$, but given the variables $2X_v$.
Hence, \eqref{yy1} yields
\begin{equation}\label{yy2}
  \YY_K\eqd 2(\YY_{K-1}\land \YYY_{K-1})+X_o,
\end{equation}
with $\YYY_{K-1}\eqd\YY_{K-1}$, 
$X_o\sim\Exp(1)$,
and $\YY_{K-1}$, $\YYY_{K-1}$ and $X_o$ independent.

Comparing \eqref{xi3} and \eqref{yy2}, we see that 
the distributions of
$\cxi_{K+1}$ and $\YY_K$ satisfy the same recursive equation, and thus they
are equal by induction.
\end{proof}

\begin{proof}[Second proof of \refT{TB}]
In the (discrete or continuous-time) \bam, define, at
any given time $t$, the 
\emph{absorption set} $A_t$ as the set of all internal
nodes $v$ such that $v$ is a neighbour of the sticky set $S_t$, but no
ancestor of $v$ is.
Consider only the process $(A_t)$ of absorption sets;
$A_t$ evolves by letting a new particle perform the random walk until it
hits $A_t$, say at $v$. Then $v$ becomes sticky, which means that
the parent $v'$ of $v$ is added to $A_t$, while $v$ and all other
descendants of $v'$ are removed. (If $v$ is the root, then instead the
process stops.)

Note that the absorption set $A_t$ is a boundary in the sense of
\refSS{SSboundary}, and that
given the boundary $A_t$ at some time $t$, the next node that becomes sticky
is chosen randomly from $A_t$ according to the harmonic measure on $A_t$,
see Sections \ref{SSrandomwalk} and \ref{SSboundary}.
Furthermore, $(A_t)_t$ is a Markov process.

From now on we consider the continuous-time version; furthermore, 
we consider the tree $T_{K+1}$ with external nodes at depth $K+1$.
Equip the nodes $v\in V_K:=\set{v:d(v)\le K}=\Vi(T_{K+1})$
with exponential clocks as in \refD{Dctt3}.
Define a process $A_t'$ of subsets of $V_K$ as follows:
\begin{romenumerate}
\item 
$A_0':=A_0=\set{v:d(v)=K}$.
\item 
Clocks outside the current $A_t'$ are ignored.
When a clock at a node $v\in A_t'$ rings, $A_t'$ is updated as above;
i.e., 
the parent $v'$ of $v$ is added to $A_t'$, while $v$ and all other
descendants of $v'$ are removed. (If $v$ is the root, then instead the
process stops.)
\end{romenumerate}
Given $A_t'$, the next clock in $A_t'$ that rings is random with a
distribution given by the harmonic measure on $A_t'$.
Hence, the process $A_t'$ just constructed has the same distribution as the
process $A_t$ in the aggregation process, and we may assume that $A_t=A_t'$
for all $t\ge0$.

For each node $v\in V_K$, let now 
$\tau_v:=\inf\set{t\ge0:u\in A_t \text{ for some }u\preceq v}$, 
i.e.,  the first time that either $v$ or one of its ancestors belongs to the
absorption set, and let
$X_v$ be the waiting time from $\tau_v$ to the next time that the clock at
$v$ rings. Then the random variables $X_v$, $v\in V_K$, are independent 
and have the exponential distributions given in \refD{Dctt2}. 
(We may define $X_v$ also for
$d(v)>K$ for completeness, but these variables will not matter.)
Define $Y_v$ by \eqref{Yv}.

For a node $v\in V_K$, let
\begin{equation}\label{zv}
  Z_v:=\min_{w\succeq v,\, d(w)=K}\bigset{Y_w-Y_v}.
\end{equation}
This is the minimum over the paths from $v$ to the boundary $\Ve(T_{K+1})$
of the sum  $\sum_u X_u$ for all nodes $u$ in the path, excluding
the endpoints. In particular, $Z_v=0$ when $d(v)=K$.

We claim that at any time $t\ge0$ with $t\le\cxi_{K+1}$,
\begin{equation}\label{at+}
  A_t=\bigset{v\in V_K:Z_v\le t\text{ but } Z_u>t \text{ for all }u\prec v},
\end{equation}
and furthermore 
\begin{align}\label{at++}
\tau_v=Z_v     \qquad \text{for every $v\in A_t$}.
\end{align}
We prove this claim by induction; it is evidently true for $t=0$, and it
then suffices to consider the finite number of times that $A_t$ changes.

Suppose that the claim holds for some time $t$. 
If $v\in A_t$, then the next time that the clock at $v$ rings is,
letting again $v'$ be the parent of $v$ and noting that
$Y_v=Y_{v'}+X_v$ 
(with $Y_{o'}:=0$),
\begin{equation}\label{zw}
\tau_v+X_v=Z_v+X_v  
=\min_{w\succeq v,\, d(w)=K}\bigset{Y_w-Y_{v'}}.
\end{equation}
Let $v$ be the node in 
the current $A_t$ such that the time $Z_v+X_v$ in \eqref{zw}
is minimal. 
Then $v$ is the next node to become sticky, and its parent $v'$ is 
the next node added to $A_t$;
this happens at time $\tau_{v'}=Z_v+X_v$,
which by \eqref{zw} equals the minimum 
over all paths from $v'$ to $\Ve(T_{K+1})$ that pass through $v$
of the sum $\sum_uX_u$ for $u$ in the path, excluding the endpoints.
A path from $v'$ to $\Ve(T_{K+1})$ that does not pass through $v$ must pass
through some other node $v''\in A_t$, and since $Z_{v''}+X_{v''}\ge
Z_v+X_v$,
it follows that $\sum_uX_u$ for $u$ in this path is $\ge Z_v+X_v$.
Hence, using \eqref{zw} and \eqref{zv}, $\tau_{v'}=Z_v+X_v= Z_{v'}$;
moreover \eqref{at+} holds up to time $Z_v+X_v$.
This completes the induction step, and thus the proof of the claim
\eqref{at+}--\eqref{at++}.

Obviously, $o\in A_t$ for some $t$, and thus \eqref{at++} applies to $v=o$.
Consequently,
the time $\cxi_{K+1}$ that the root becomes sticky is, 
using the definitions of $\tau_o$ and $X_o$ together with
\eqref{at++},
\eqref{Yv}, 
\eqref{zv}
and \eqref{YY}, 
\begin{equation}
\cxi_{K+1}=
  \tau_o+X_o
=Z_o+X_o
=Z_o+Y_o
=\YY_K.
\end{equation}
\end{proof}

\begin{proof}[Proof of \refT{TC}]

\pfitemref{Tcc}
\refT{TB} and \refC{CA} yield, for $K\ge1$ and $t\ge0$,
\begin{equation}
  \P\bigpar{\cxi_{K}\le t}
=
  \P\bigpar{\YY_{K-1}\le t}
=
\P\bigpar{\he(\ctt)\ge K}.
\end{equation}

\pfitemref{Tcd}  
By \eqref{hy},
\begin{equation}
  \YY_k = \min\bigset{t\ge0: \he(\ctt)> k}.
\end{equation}
Define analoguously, for the discrete time process,
\begin{equation}\label{YYX}
  \YYX_k := \min\bigset{n\ge0: \he(\dtn)> k}.
\end{equation}
Then, see the relations \eqref{dtct},
\begin{equation}\label{metro}
  \YY_k = \taux{\YYX_k}
=\sum_{i=1}^{\YYX_k}\eta_i,
\end{equation}
where as in \eqref{xixi}, $\eta_i$ are \iid{} $\Exp(1)$ and independent of
the discrete time process.
Hence, \eqref{xixi}, \refT{TB} and \eqref{metro} yield
\begin{equation}\label{rgn}
  \sum_{i=1}^{\xi_{K+1}}\eta_i
=\Xi_{K+1}
\eqd
\YY_K=
\sum_{i=1}^{\YYX_K}\eta_i.
\end{equation}
If we take the Laplace transforms of the \lhs, we obtain
by conditioning on $\xi_{K+1}$, for any $s\ge0$,
\begin{equation}
  \E\exp\biggpar{-s \sum_{i=1}^{\xi_{K+1}}\eta_i}
=\E \Bigpar{\bigpar{\E e^{-s\eta}}^{\xi_{K+1}}}
=\E \Bigpar{\xpar{1+s}^{-\xi_{K+1}}}.
\end{equation}
This and an identical calculation for the \rhs{} show that,
taking $s=x\qw-1$,
$ \E\bigpar{x^{\xi_{K+1}}}=\E\bigpar{x^{\YYX_K}}$ for every $x\in(0,1)$.
In other words, $\xi_{K+1}$ and $\YYX_K$ have the same \pgf, and thus the
same distribution.

Consequently, 
using the definition \eqref{YYX}, for $K\ge0$,
\begin{equation}
  \P\bigpar{\xi_{K+1}\le n}
=
  \P\bigpar{\YYX_{K}\le n}
=
  \P\bigpar{\he(\dtn)>K}
=
  \P\bigpar{\he(\dtn)\ge K-1}
.\end{equation}
The result follows by replacing $K$ by $K-1$.
\end{proof}

\section{Proofs of \refTs{Txi} and \ref{TE}}\label{SpfTxi}

We next prove \refT{Txi},
using
\citet[Theorem 4 and its proof in Section 6.1]
{DrmotaEtAl-DST}.

\begin{proof}[Proof of \refT{Txi}]
Let $n=n_K$, $K\ge1$, be such that
\begin{align}\label{ja}
\lgg n=
K -\sqrt{2K} + \frac12\log_2 K -\frac{1}{\ln2} +
\frac{\log_2K}{4\sqrt{2K}} + \frac{a_K}{\sqrt K}
\end{align}
for some sequence $a_K=\oo\bigpar{\sqrt K}$ as \Ktoo.
Define
\begin{align}\label{jb}
  \kkk&:=\lgg n + \sqrt{2\lgg n} -\frac12\lgg\lgg n + \frac{1}{\ln2},
\\\label{hth}
\hth&:=\frac{3\lgg\lgg n}{4\sqrt{2\lgg n}},
     \end{align}
{and, as in \cite{DrmotaEtAl-DST},}
     \begin{align}\label{jc}  
k_H&:=\floor{\kkk},
\\\label{jd}
k_\ell&:=k_H+\ell, \text{ for } \ell \in \bbZ, 
\\\label{je}
\gth&:=\kkk-k_H\in[0,1),
\end{align}
Elementary calculations show that 
\begin{align}\label{jf}
  \sqrt{\lgg n} &
= \sqrt{K} -\frac{1}{\sqrt2} + \frac{\lgg K}{4\sqrt K} +
  \OO\Bigparfrac{1}{\sqrt K},
\\\label{jg}
  \lgg\lgg n &
= \lgg{K} +  \OO\Bigparfrac{1}{\sqrt K},
\\\label{jh}
\kkk-\hth
&=
K -1+ \frac{a_K}{\sqrt K}+ \OO\Bigparfrac{1}{\sqrt K},
\\\label{jk}
k_1&=\kkk-\gth+1
=K+ \hth-\gth + \frac{a_K+\OO(1)}{\sqrt K} 
,\\\label{jl}
\gth-\hth&
=K-k_1 + \frac{a_K+\OO(1)}{\sqrt K} 
.\end{align}
In particular, since $a_K=\oo\bigpar{\sqrt{K}}$ and $\hth=\oo(1)$,
\eqref{jh} implies
\begin{align}\label{jm}
\kkk=K-1+\oo(1)
,\end{align}
and thus, for all large $K$,
\begin{align}\label{jn}
k_H=\floor{\kkk}\in\set{K-1,K-2}.  
\end{align}
In other words, for large $K$, either $K=k_H+1=k_1$ or $K=k_H+2=k_2$. 

Suppose now that $a_K\to-\infty$. 
On the subsequence where $K=k_2$ (if there are any such $K$),
we have by \eqref{tcd} and \cite[Lemma 14]{DrmotaEtAl-DST}, 
writing $H_n:=\he(\dtn)$ as in  \cite{DrmotaEtAl-DST}, 
\begin{align}\label{jo}
  \P(\xi_k \le n)
=\P\bigpar{H_n\ge K}
=\P\bigpar{H_n> k_H+1}\to0.
\end{align}
On the subsequence where $K=k_1$ (if there are any such $K$),
\eqref{jl} yields
\begin{align}\label{jp}
  \gth-\hth = -\frac{\go(1)}{\sqrt K}
= -\frac{\go(1)}{\sqrt{\lgg n}},
\end{align}
and thus, using also
\cite[Remark 5]{DrmotaEtAl-DST}, 
$\P(H_n=k_1)\to0$, and thus
\begin{align}\label{jq}
  \P(\xi_k \le n)
=\P\bigpar{H_n\ge K}
=\P\bigpar{H_n= k_H+1}
+ \P\bigpar{H_n> k_H+1}
\to0.
\end{align}
Together, \eqref{jo} and \eqref{jq} show that if $a_K\to-\infty$,
then $  \P(\xi_k \le n)\to0$ as \Ktoo, regardless of whether $K=k_1$ or $k_2$.

On the other hand, suppose that $a_K\to+\infty$.
Since $\hth>0$ (for large $K$ at least), \eqref{jh} implies that for large
$K$,
$\kkk\ge K-1$, and thus, by \eqref{jn}, $k_H=K-1$ and $K=k_1$.
Furthermore, \eqref{jl} implies 
\begin{align}\label{jpp}
  \gth-\hth = \frac{\go(1)}{\sqrt K}
= \frac{\go(1)}{\sqrt{\lgg n}}.
\end{align}
Hence, 
\cite[Remark 5 and Lemma 13]{DrmotaEtAl-DST} imply that 
$\P(H_n\le k_H)\to0$, and
thus \eqref{tcd} yields
\begin{align}\label{jr}
  \P(\xi_k \le n)
=\P\bigpar{H_n\ge K}
=\P\bigpar{H_n>k_H}
\to1.  
\end{align}

Finally, define
  \begin{equation}\label{txiZ}
Z_K:=\sqrt K\Bigpar{\log_2  \xi_K - 
\Bigpar{K -\sqrt{2K} + \frac12\log_2 K -\frac{1}{\ln 2} +
\frac{\log_2K}{4\sqrt{2K}}}}.
  \end{equation}
Then, \eqref{jo}, \eqref{jq} and \eqref{jr} 
show, together with \eqref{ja}, that if $a_K\to-\infty$, then 
$\P(Z_K\le a_K)\to0$, while 
if $a_K\to+\infty$, then 
$\P(Z_K\le a_K)\to1$.
This is equivalent to $Z_K=\Op(1)$, and thus to \eqref{txi}.
\end{proof}

Finally, we use \refT{Txi} to prove \refT{TE} on the mean.

\begin{proof}[Proof of \refT{TE}]
In this proof, all limits are as \Ktoo.
First,
\eqref{xik2} implies, 
\begin{align}\label{ptoxi}
\xi_K/\mk\pto1, 
\end{align}
and thus, by \eqref{xixi} and the law of large numbers,
\begin{align}\label{ptoXi}
\Xi_K/\mk\pto1.
\end{align}
Note that
this immediately implies, by Fatou's lemma
\cite[Theorem 5.5.3]{Gut},
\begin{align}\label{eleonora}
  \liminf_{\Ktoo}\frac{\E\Xi_K}{\mk}
\ge 1.
\end{align}

To obtain also an upper bound,
we use tail estimates by \citet{Drmota2002}. Note that Drmota uses the
internal height, thus his $H_n=\he(\dtn)-1$. 
Furthermore, $P_k(x)$ in \cite{Drmota2002} is the distribution function of the 
Poissonized version of $H_n$, and thus in our notation
\begin{align}
  P_k(x)=\P\bigpar{\he(\ct_x)-1 \le k}.
\end{align}
Hence, by \eqref{tcc}, for $K\ge 2$ and $x\ge0$,
\begin{align}\label{kucku}
  \P(\Xi_K>x) = \P\bigpar{\he(\ct_x)\le K-1}=P_{K-2}(x).
\end{align}
We use \cite[Lemma 4]{Drmota2002}, 
for convenience denoting $n_{K-2}$ there
by $\nk$ and noting that $\frac12< c_k< 1$ for large $k$;
this yields together with \eqref{kucku}, for large $K$,
\begin{align}
  \P(\Xi_K>x) &\ge \Bigpar{1-\frac{1}{\nk}}e^{-x/\nk},
&& 0\le x\le \nk, \label{drm<}
\\
  \P(\Xi_K>x) &\le e^{-x/(2\nk)},
&&  x\ge \nk.\label{drm>}
\end{align}
Let $\eps>0$. Then \eqref{ptoXi} says that 
$\P\bigpar{(1-\eps)\mk<\Xi_K<(1+\eps)\mk}\to1$,
which combined with \eqref{drm<}--\eqref{drm>} (taking $x=\nk$)
implies that for large $K$
we must have $(1-\eps)\mk < \nk<(1+\eps)\mk$.
In other words,
\begin{align}\label{nkmk}
  \nk/\mk\to1.  
\end{align}
Hence, \eqref{ptoXi} is equivalent to $\Xi_K/\nk\pto1$,
which means 
\begin{align}
\P\bigpar{\xfrac{\Xi_K}{\nk}>x}\to\ett{x<1} 
\end{align}
for every $x\neq1$.  
Furthermore, \eqref{drm>} implies that, for large $K$,
\begin{align}
\P\bigpar{\xfrac{\Xi_K}{\nk}>x}\le \ett{x<1}+e^{-x/2}   
\end{align}
for every $x\ge0$.
Consequently, dominated convergence yields
\begin{align}
  \E\frac{\Xi_K}{\nk}
=\intoo \P\Bigpar{\frac{\Xi_K}{\nk}>x}\dd x
\to\intoo \ett{x<1} \dd x=1
\end{align}
as \Ktoo.
The result follows by \eqref{nkmk} and \eqref{exixi}.
\end{proof}

\begin{remark} \label{RE}
To prove \refConj{CONJE} by similar arguments, one would need much stronger
tail estimates that \eqref{drm<}--\eqref{drm>}.
It seems that the method of proof of \cite[Lemma 14]{DrmotaEtAl-DST} 
might give the required estimates;
however, we have not verified the (non-trivial) 
details and leave the conjecture as an open problem.
%
\end{remark}

\section{$b$-ary trees}\label{Sdary}

We have in this paper only considered binary trees.
A random $b$-ary  digital search tree can be constructed 
in the same way for any given $b\ge2$, using strings 
$W_i$ with letters from an alphabet $\cA$ of size $b$, for example
$\cA=\{0,1,\cdots, b-1\}$; we still assume that the letters are independent
and that all letters have the same probability (\viz{} $1/b$).

Similarly,  the \bam{} can be defined on $b$-ary trees as in Definition
\ref{Dsticky}, where now the  random walk at each step selects a child with
probability $1/b$ each.

Most of the results above hold with only trivial changes.
The  harmonic measure \eqref{harmonic} becomes $b^{-d(v)}$.
In Definitions \ref{Dctt2} and \ref{Dctt3}, the rate should be
$b^{-d(v)}$.
In particular, \refTs{TC} and \ref{TB} still hold (by the same arguments).

However, \refT{Txi} uses results for the binary case proved in
 \cite{DrmotaEtAl-DST}; the results and methods there ought to generalize
to arbitrary $b$, but that has not yet been done, so we cannot extend this
result to larger $b$.
Nevertheless,
we conjecture that 
for the \bam{} on regular $b$-ary trees, 
for a suitable constant $c_b>0$, 
\begin{equation}
\label{Eq: number_particles_d-ary_tree_high_prob}
\log_b  \xi_K = K -\sqrt{2K}+c_b \log_b K+\Op(1). 
\end{equation}


\newcommand\AAP{\emph{Adv. Appl. Probab.} }
\newcommand\JAP{\emph{J. Appl. Probab.} }
\newcommand\JAMS{\emph{J. \AMS} }
\newcommand\MAMS{\emph{Memoirs \AMS} }
\newcommand\PAMS{\emph{Proc. \AMS} }
\newcommand\TAMS{\emph{Trans. \AMS} }
\newcommand\AnnMS{\emph{Ann. Math. Statist.} }
\newcommand\AnnPr{\emph{Ann. Probab.} }
\newcommand\CPC{\emph{Combin. Probab. Comput.} }
\newcommand\JMAA{\emph{J. Math. Anal. Appl.} }
\newcommand\RSA{\emph{Random Struct. Alg.} }
\newcommand\ZW{\emph{Z. Wahrsch. Verw. Gebiete} }
\newcommand\DMTCS{\jour{Discr. Math. Theor. Comput. Sci.} }

\newcommand\AMS{Amer. Math. Soc.}
\newcommand\Springer{Springer-Verlag}
\newcommand\Wiley{Wiley}

\newcommand\vol{\textbf}
\newcommand\jour{\emph}
\newcommand\book{\emph}
\newcommand\inbook{\emph}
\def\no#1#2,{\unskip#2, no. #1,} 
\newcommand\toappear{\unskip, to appear}

\newcommand\arxiv[1]{\texttt{arXiv}:#1}
\newcommand\arXiv{\arxiv}

\def\nobibitem#1\par{}

\end{document}